\documentclass[12pt]{article}
\usepackage{amssymb}
\usepackage{graphicx}
\usepackage{color}
\usepackage{latexsym}
\usepackage{amssymb,amsmath,amstext}
\usepackage{epsfig}
\usepackage{graphics}
\usepackage{amsthm}
\usepackage{verbatim}
\usepackage{placeins}
\usepackage[colorlinks=true, urlcolor=blue, linkcolor=blue, citecolor=blue]{hyperref}
\usepackage[margin=1in]{geometry}
\setcounter{MaxMatrixCols}{20}

\numberwithin{equation}{section}
\theoremstyle{plain}%
\newtheorem{theorem}{Theorem}
\numberwithin{theorem}{section}
\newtheorem{proposition}[theorem]{Proposition}
\newtheorem{example}[theorem]{Example}
\newtheorem{lemma}[theorem]{Lemma}
\newtheorem{corollary}[theorem]{Corollary}
\newtheorem{definition}[theorem]{Definition}

\DeclareMathOperator{\val}{val}
\newcommand{\C}{\mathbb{C}}

\newcommand{\R}{\mathbb{R}}

\allowdisplaybreaks

\begin{document}

\title{\bf The Tropical Commuting Variety}

\author{Ralph Morrison and Ngoc M. Tran
}
\date{}

\maketitle

\begin{abstract}\noindent 
We study tropical commuting matrices from two viewpoints:  linear algebra and algebraic geometry. 
In classical linear algebra, there exist various criteria to test whether two square matrices commute. We ask for similar criteria in the realm of tropical linear algebra, giving conditions for two tropical matrices that are polytropes to commute. From the algebro-geometric perspective, 
we explicitly compute the tropicalization of the classical variety of commuting matrices in dimension 2 and 3. 
\end{abstract}

\section{Introduction} 

There are various ways to study the pairs of $n\times n$ matrices $X$ and $Y$ over a field $k$ that commute under matrix multiplication.  Linear algebraically, one can ask for criteria to determine commutativity.  Algebro-geometrically, one can study the \emph{commuting variety}, which  is generated by the $n^2$ equations $(XY)_{ij} - (YX)_{ij} = 0$.  These perspectives and many other variants have been studied in the classical setting \cite[\textsection 5]{OCV}.  
This paper considers similar questions for tropical and tropicalized matrices. 

The tropical min-plus algebra $(\overline{\R} , \oplus, \odot)$ is defined by $\overline{\R}=\R\cup\{\infty\}$, $a \oplus b = \min(a,b)$, $a \odot b = a + b$. A pair of $n \times n$ tropical matrices $A = (a_{ij})$, $B = (b_{ij})$ commute if $A \odot B = B \odot A$, where matrix multiplication takes place in the min-plus algebra. Explicitly, this means that for all $1\leq i,j  \leq n$,
$$ \min_{s=1, \ldots, n} a_{is} + b_{sj} = \min_{s=1, \ldots, n} b_{is} + a_{sj}.$$

Tropical linear algebra has extensive applications to discrete events systems \cite{BCOQ}, scheduling \cite{Bu}, pairwise ranking \cite{Tr}, and auction theory \cite{BK}, amongst others. However, the tropical analogues of many fundamental results in classical linear algebra remain open. Commutativity of tropical matrices is one such example. Classically, if $A,B\in \C^{n\times n}$ where $A$ has $n$ distinct eigenvalues, then $A  B=B  A$ if and only if $B$ can be written as a polynomial in $A$ \cite[\textsection 5]{OCV}.  Moreover, if $B$ has $n$ distinct eigenvalues, then $A  B=B  A$ if and only if $A$ and $B$ are simultaneously diagonalizable.  In a similar spirit, we have the following criterion for a special class of matrices called polytropes to commute tropically. Here the Kleene star $A^\ast$ of a polytrope $A$ is the finite geometric sum $A \oplus A^{\odot 2} \oplus \cdots \oplus A^{\odot n}$. 

\begin{theorem}\label{main_theorem}
Suppose $A, B \in \mathbb{R}^{n \times n}$ are polytropes. If $A \oplus B = (A \oplus B)^\ast$, then $A\odot B = B\odot A$. If $A\odot B = B\odot A$, then $(A \oplus B)^{\odot 2} = (A \oplus B)^\ast$. In particular, for $n = 2,3$, $A\odot B=B\odot A$ if and only if $A \oplus B = A \odot B$.
\end{theorem}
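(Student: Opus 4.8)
The plan is to reduce the biconditional to a short list of scalar inequalities among the entries of $A$ and $B$ and then verify them when $n=2,3$. Write $C=A\oplus B$; recall that a polytrope $A$ has $a_{ii}=0$ and satisfies all triangle inequalities $a_{ik}+a_{kj}\ge a_{ij}$ (equivalently $A=A^{\odot 2}=A^{\ast}$), so $A^{\odot 2}=A$, $B^{\odot 2}=B$, $C$ has zero diagonal, and $A\odot B,\,B\odot A\le C$ entrywise (take the middle index to be $i$, then $j$). Distributing $\odot$ over $\oplus$, using $A^{\odot 2}=A$ and $B^{\odot 2}=B$ to collapse the pure terms, and then absorbing the leftover copy of $C$, one obtains
\[ C^{\odot 2}=(A\odot B)\oplus(B\odot A). \]
Because $C$ has zero diagonal, $C^{\odot k}$ is entrywise nonincreasing in $k$, so $C=C^{\ast}$ iff $C=C^{\odot 2}$ iff $A\odot B=C$ and $B\odot A=C$. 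This already gives the first implication of the Theorem, and it shows the final biconditional will follow once we establish, for $n\in\{2,3\}$: (i) $A\odot B=B\odot A\Rightarrow A\odot B=A\oplus B$; and (ii) $A\odot B=A\oplus B\Rightarrow A\odot B=B\odot A$. For $n=2$ both are immediate: the off-diagonal entries of $A\odot B$ equal $\min(a_{ij},b_{ij})$ automatically, so $A\odot B=A\oplus B$ is equivalent to $a_{12}+b_{21}\ge0$ and $a_{21}+b_{12}\ge0$, which is symmetric enough to force $B\odot A=A\oplus B$ as well; and $A\odot B=B\odot A$ forces $\min(0,a_{12}+b_{21})=\min(0,a_{21}+b_{12})$, which cannot be a common negative value since $(a_{12}+a_{21})+(b_{12}+b_{21})\ge0$, so both sums are $\ge 0$.

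For (i) with $n=3$, the diagonal entries are handled uniformly. If $A$ and $B$ commute then $P:=A\odot B$ satisfies $P^{\odot 2}=A\odot B\odot A\odot B=A\odot(A\odot B)\odot B=A^{\odot 2}\odot B^{\odot 2}=P$ (the middle equality uses $B\odot A=A\odot B$), hence $P_{ii}=(P^{\odot 2})_{ii}\le 2P_{ii}$, forcing $P_{ii}\ge0$; together with $P_{ii}\le a_{ii}+b_{ii}=0$ this gives $P_{ii}=0=c_{ii}$. For an off-diagonal entry $(i,j)$ and a middle index $k\notin\{i,j\}$ I argue by contradiction: if $a_{ik}+b_{kj}<m:=\min(a_{ij},b_{ij})$, then $(A\odot B)_{ij}<m$, so by commutativity $(B\odot A)_{ij}<m$, so $b_{i\ell}+a_{\ell j}<m$ for some $\ell$; the choices $\ell\in\{i,j\}$ are excluded (they yield $a_{ij}$ and $b_{ij}$), and when $n\le3$ the only remaining index is $\ell=k$, so $b_{ik}+a_{kj}<m$ as well. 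Adding the two gives $(a_{ik}+a_{kj})+(b_{ik}+b_{kj})<2m$, contradicting the triangle inequalities $a_{ik}+a_{kj}\ge a_{ij}\ge m$ and $b_{ik}+b_{kj}\ge b_{ij}\ge m$. This "forced middle index" is precisely where $n\le3$ enters.

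For (ii), rewrite the hypothesis as $a_{ik}+b_{kj}\ge\min(a_{ij},b_{ij})$ for all $i,j,k$ and, by the reduction above, the goal as $b_{ik}+a_{kj}\ge\min(a_{ij},b_{ij})$ for all $i,j,k$. The cases $k\in\{i,j\}$ are trivial, and the diagonal case $i=j$ is immediate from the hypothesis taken with $u=v=k$, $w=i$, namely $a_{ki}+b_{ik}\ge\min(a_{kk},b_{kk})=0$. What remains — and what I expect to be the main obstacle — is the case with $i,j,k$ distinct, hence $\{i,j,k\}=\{1,2,3\}$ when $n=3$ (and vacuous when $n=2$). Here I would assume $b_{ik}+a_{kj}<m:=\min(a_{ij},b_{ij})$, deduce $b_{ik}<a_{ik}$ and $a_{kj}<b_{kj}$ from the triangle inequalities, and then run a finite case analysis: feed the instances of the hypothesis controlling $(A\odot B)_{ij}$, $(A\odot B)_{ik}$, $(A\odot B)_{kj}$ and the "reversed" triples into the triangle inequalities and $2$-cycle inequalities of $A$ and $B$, splitting on which of $a_{ij},b_{ij}$ (and which of $a_{ki},b_{ki}$) is smaller, to reach a contradiction. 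Keeping this casework short, and tracking exactly where $n\le 3$ is needed, is the delicate point.
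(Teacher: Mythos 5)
Your first paragraph is correct and is essentially the paper's own argument: expanding $(A\oplus B)^{\odot 2}$ and absorbing the copy of $A\oplus B$ gives $C^{\odot 2}=(A\odot B)\oplus(B\odot A)$, and since both products are entrywise at most $C$, the condition $C=C^{\ast}$ forces $A\odot B=B\odot A=C$. Likewise your step (i) for $n\le 3$ (the only admissible middle index for $(B\odot A)_{ij}$ is the same $k$, and adding $a_{ik}+b_{kj}<m$ to $b_{ik}+a_{kj}<m$ contradicts the triangle inequalities of $A$ and of $B$) is exactly the paper's proof of Corollary~\ref{corollary:three}, phrased a bit more symmetrically. One omission: you never actually prove the second assertion of Theorem~\ref{main_theorem}, that commutativity implies $(A\oplus B)^{\odot 2}=(A\oplus B)^{\ast}$ for arbitrary $n$. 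Monotonicity of the powers of $C$ only gives $C^{\ast}\le C^{\odot 2}$; you also need the powers to stabilize at the second one. This is quick --- either expand $(A\oplus B)^{\odot m}=\bigoplus_{k}A^{\odot k}\odot B^{\odot (m-k)}=A\odot B$ under commutativity as the paper does, or use your own observation that $P=A\odot B$ satisfies $P^{\odot 2}=P$, so $C^{\odot 2}\ge C^{\odot 3}\ge C^{\odot 4}=(C^{\odot 2})^{\odot 2}=C^{\odot 2}$ --- but it has to be said.

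The genuine gap is your step (ii), which you leave as a planned case analysis: it is false, so no case analysis can close it. The paper's final example of Section~2 exhibits two $3\times 3$ polytropes with $A\odot B=A\oplus B$ but $(B\odot A)_{13}=2.79<5.04=(A\odot B)_{13}$, hence $A\odot B\ne B\odot A$; so the one-sided hypothesis $A\odot B=A\oplus B$ does not imply commutativity for $n=3$ (your $n=2$ argument is fine, since there every pair of polytropes commutes). The equivalence that is true, and that the paper proves via Theorem~\ref{main_theorem} together with Corollary~\ref{corollary:three}, uses the symmetric condition: commutativity holds if and only if $A\oplus B=(A\oplus B)^{\ast}$, equivalently if and only if \emph{both} $A\odot B$ and $B\odot A$ equal $A\oplus B$, equivalently $A\oplus B=(A\odot B)\oplus(B\odot A)$. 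You already have every ingredient for this corrected statement: your (i) applied to the commuting pair gives $A\odot B=B\odot A=A\oplus B$, hence $C=C^{\odot 2}=C^{\ast}$, and the converse is your first implication. So rather than attempting the casework you describe, you should replace (ii) by this symmetric reformulation; the clause ``$A\odot B=B\odot A$ iff $A\oplus B=A\odot B$'' cannot be established in the literal one-sided reading, as the paper's own example makes explicit.
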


Previous works on commuting tropical matrices have also focused on polytropes \cite{KSS, LP}, due to its special role as the projection to the tropical eigenspace \cite{KSS, SSB}. To the best of our knowledge, this is the first necessary and sufficient characterization of commutativity for polytropes for $n < 4$.

The second half of our paper looks at tropical commuting matrices from the viewpoint of tropical algebraic geometry. This is a successful young field bridging combinatorics and algebraic geometry. It has many applications ranging from curve counting, to number theory, to polyhedral geometry, to phylogenetics \cite{MS}. We study the relation between three sets of pairs of $n\times n$ matrices which all `commute tropically' in different sense: the \emph{tropical commuting set} $\mathcal{TS}_n$, the set of all pairs of tropical commuting matrices; the \emph{tropical commuting variety} $\mathcal{TC}_n$, the \emph{tropicalization} of the classical commuting variety; and the \emph{tropical commuting prevariety} $\mathcal{T}_{{\rm pre}, n}$, the intersection of the tropical hypersurfaces corresponding to the natural generators of classical commuting variety. In addition to the inclusion $\mathcal{TC}_n\subset \mathcal{T}_{{\rm pre},n}$ (by definition), one can quickly see that $\mathcal{TC}_n\subset \mathcal{TS}_n$. We show that in general both of these inclusions can be strict, and neither $\mathcal{TS}_n$ nor $\mathcal{T}_{{\rm pre},n}$ is contained in the other when $n>2$. 

\subsection{Outline}

In Section \ref{sec:ts} we present background, notation, and results in tropical linear algebra, and prove Lemma \ref{lem:fixedpoint}.  We use these results to prove Theorem \ref{main_theorem}, and then illustrate the geometry of commuting polytropes.  
In Section \ref{sec:tc} we review basic concepts in tropical algebraic geometry, present the relationships between the spaces $\mathcal{TS}_n$, $\mathcal{TC}_n$, and $\mathcal{T}_{{\rm pre}, n}$, and explicitly compute these sets for $n=2,3$ using the software \textsf{gfan} \cite{Je}. Complete description of our computations, including input files, commands and output files can be found at the public GitHub repository \url{http://github.com/princengoc/tropicalCommutingVariety}. Finally, we conclude with open problems in Section \ref{sec:summary}.

\section{Commuting Polytropes: Algebraic and Geometric Characterizations}\label{sec:ts}

\subsection{Background}

We begin with some notation and basic facts in tropical linear algebra. See \cite[\S 1-3]{Bu} for more details.

If $n$ is a positive integer, let $[n]=\{1,2,.\ldots n\}$. We will  write tropical matrix multiplication as $A\odot B$ to remind the reader of the min-linear nature of the algebra. Let $I$ denote the tropical identity matrix, with $0$ on the diagonal and $\infty$ elsewhere. Let $\mathbb{TP}^{n-1} := \R^n / \R(1, \ldots, 1)$ be the tropical torus. If $C \subset \R^n$ is closed under scalar tropical multiplication, we shall identify it with the set in $\mathbb{TP}^{n-1}$ obtained by normalizing the first coordinate to be 0. The image of a matrix $A$, denoted $\text{im}(A)$, is an example of such a set. 
The \emph{tropical convex hull} between two points $x,y \in \R^n$ is
$$ [x,y] = \{a \odot x \oplus b \odot y: a,b \in \R \}. $$
As a set in $\mathbb{TP}^{n-1}$, this is called the tropical line segment between $x$ and $y$. A \emph{tropical polytope}, also known as tropical semi-module, is the tropical convex hull of finitely many points. The image of an $n \times n$ matrix is a tropical polytope with at most $n$ distinct vertices in $\mathbb{TP}^{n-1}$. For an $n\times n$ matrix $A$ with tropical eigenvalue $0$, the \emph{Kleene star} of $A$ is the matrix $A^\ast = I \oplus \bigoplus_{i=1}^\infty A^{\odot i}$. This is in fact equals to the finite sum $I \oplus \bigoplus_{i=1}^n A^{\odot i}$. 
 
We can view a matrix $A \in \R^{n^2}$ as a map $A:\mathbb{TP}^{n-1} \rightarrow \mathbb{TP}^{n-1}$.  Each of the columns of $A$ defines a point in $\mathbb{TP}^{n-1}$, and the image of $A$ is the tropical convex hull of these points.  A particularly nice case is where the image is full dimensional. This leads us to the following definition.

\begin{definition}
A matrix $A \in \R^{n \times n}$ is a \emph{premetric} if $A_{ii} = 0$, $A_{ij} > 0$ for all $i \neq j \in [n]$.
\end{definition}

In this case, $A$ has eigenvalue $0$, and its image in $\mathbb{TP}^{n-1}$ is a full-dimensional tropical simplex whose main cell has type $(0, 1, \ldots, n-1)$ in the sense of \cite{DS}. 

\begin{definition}  A matrix $A \in \R^{n \times n}$ is a \emph{polytrope} if $A$ is a premetric, and for all $i,j,k \in [n]$, $A_{ij} \leq A_{ik} + A_{kj}$. 
\end{definition}

There are many equivalent characterizations of polytrope, e.g., that it is a premetric and $A = A^{\odot 2}$, or that it is a Kleene star of some matrix \cite[\S 4]{Bu}. A polytrope $A$ has eigenvalue $0$, and the $n$ columns of $A$ are its $n$ eigenvectors. The image of $A$ in $\mathbb{TP}^{n-1}$ is a full-dimensional tropical polytope that is also convex in the usual Euclidean sense. 

For any matrix $A \in \R^{n^2}$ and $b \in im(A)$, we can consider its preimage under $A$, i.e. the set of $x \in \R^n$ such that $A\odot x = b$ \cite[\S 3.1-3.2]{Bu}. If $A$ is a polytrope, this preimage has a simple and explicit form. We note that the following theorem is a special case of Theorem 3.1.1 in \cite{Bu}, attributed to Cunninghame-Green (1960) and Zimmerman (1976). This result was also independently re-discovered by Krivulin \cite{Ki}.   

\begin{theorem}\label{thm:image}
Let $A \in \R^{n^2}$ be a polytrope. Define $I = \{i_1, \ldots, i_k\}$ for some $1 \leq k \leq n$. Suppose that $b \in im(A)$ has the form
\begin{equation}\label{eqn:form.b}
b = \bigoplus_{i \in I} a_i \odot A_i = a_1 \odot A_{i_1} \oplus \ldots \oplus a_k \odot A_{i_k}.
\end{equation}
 Then $A\odot x = b$ if and only if
\begin{equation}\label{eqn:form.x}
x = b + \sum_{j \in [n] \backslash I}t_j\mathsf{e}_j,
\end{equation}
where $t_j \geq 0$, and $\mathsf{e}_j$ is the unit vector on the $j$-th coordinate.
\end{theorem}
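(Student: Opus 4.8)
The plan is to reduce to a normalized, irredundant representation of $b$, record two elementary facts about a polytrope $A$ applied to $b$, and then prove the two implications of the equivalence separately. The reverse implication will be a short sandwich estimate, while the forward implication, where the real content lies, is a descent argument driven by the triangle inequality $A_{ij}\le A_{ik}+A_{kj}$ that defines a polytrope.

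I would begin by normalizing the coefficients. For $i\in I$ the $i$-th coordinate of $b=\bigoplus_{i'\in I}a_{i'}\odot A_{i'}$ equals $\min_{i'\in I}(a_{i'}+A_{i\,i'})\le a_i+A_{ii}=a_i$, so $b_i\le a_i$; replacing each $a_i$ by $b_i$ therefore only decreases the summands, while $\bigoplus_{i\in I}b_i\odot A_i\ge\bigoplus_{i\in[n]}b_i\odot A_i=A\odot b=b$, so this replacement does not change $b$. Hence I may assume $b=\bigoplus_{i\in I}b_i\odot A_i$ and, after shrinking $I$, that $I$ is minimal with this property---this is what it means for \eqref{eqn:form.b} to be written without redundant terms. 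Two facts then follow. (A) $A\odot b=b$: since $A^{\odot 2}=A$, every column satisfies $A\odot A_i=A_i$, so distributing $A\odot(-)$ over $b=\bigoplus_{i\in I}b_i\odot A_i$ returns $b$; in particular $b_\ell\le A_{\ell j}+b_j$ for all $\ell,j$. (B) Every solution $x$ of $A\odot x=b$ satisfies $x\ge b$ coordinatewise, because $b_\ell=(A\odot x)_\ell=\min_j(A_{\ell j}+x_j)\le A_{\ell\ell}+x_\ell=x_\ell$.

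For the ``if'' direction, let $x=b+\sum_{j\notin I}t_j\mathsf{e}_j$ with all $t_j\ge0$. Then $x\ge b$, so $A\odot x\ge A\odot b=b$ by monotonicity of $\odot$ and (A); on the other hand, restricting the minimum defining $(A\odot x)_\ell$ to indices $j\in I$, where $x_j=b_j$, gives $(A\odot x)_\ell\le\min_{j\in I}(A_{\ell j}+b_j)=\bigl(\bigoplus_{j\in I}b_j\odot A_j\bigr)_\ell=b_\ell$, so $A\odot x=b$. For the ``only if'' direction, suppose $A\odot x=b$, write $x=b+t$ with $t\ge0$ using (B), and assume toward a contradiction that $t_{j_0}>0$ for some $j_0\in I$. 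The minimum $\min_j(A_{j_0 j}+x_j)=b_{j_0}$ is attained at some $k_0\ne j_0$ (the index $j_0$ would force $x_{j_0}=b_{j_0}$), and combining $A_{j_0 k_0}+x_{k_0}=b_{j_0}$ with $x_{k_0}\ge b_{k_0}$ and (A) forces $x_{k_0}=b_{k_0}$ and $A_{j_0 k_0}+b_{k_0}=b_{j_0}$. Choosing $i_1\in I$ with $b_{k_0}=A_{k_0 i_1}+b_{i_1}$, the triangle inequality $A_{j_0 i_1}\le A_{j_0 k_0}+A_{k_0 i_1}$ together with (A) upgrades these tight equalities to $A_{j_0 i_1}+b_{i_1}=b_{j_0}$; moreover $i_1\ne j_0$, since $i_1=j_0$ would give $A_{j_0 k_0}+A_{k_0 j_0}=0$ with $k_0\ne j_0$, impossible for a premetric. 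Finally, minimality of $I$ yields an index $\ell$ witnessing that $j_0$ cannot be dropped, namely $A_{\ell j_0}+b_{j_0}=b_\ell$ while $A_{\ell i}+b_i>b_\ell$ for all $i\in I\setminus\{j_0\}$; but chaining $A_{\ell j_0}+b_{j_0}=b_\ell$ with $A_{j_0 i_1}+b_{i_1}=b_{j_0}$ through the triangle inequality and (A) produces $A_{\ell i_1}+b_{i_1}=b_\ell$ with $i_1\in I\setminus\{j_0\}$, the desired contradiction. Hence $t_{j_0}=0$ for every $j_0\in I$, which is exactly \eqref{eqn:form.x}.

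The main obstacle is this forward implication: one must track the ``tightness'' relation $i\rightsquigarrow j\iff A_{ij}+b_j=b_i$ and use repeatedly that it composes---which is precisely where the polytrope inequality $A_{ij}\le A_{ik}+A_{kj}$ is used---while also extracting, for each essential index $j_0\in I$, a coordinate $\ell$ that detects its essentiality. Once the normalization and facts (A) and (B) are in place, the ``if'' direction and all the bookkeeping in ``only if'' are purely formal, so the argument is short.
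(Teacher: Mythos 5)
The paper does not actually prove this theorem: it is quoted from the literature (Theorem 3.1.1 of Butkovi\v{c}, attributed to Cunninghame-Green and Zimmermann, and rediscovered by Krivulin), so there is no in-paper argument to compare yours against. Judged on its own, your proof is correct and self-contained: the normalization $a_i \mapsto b_i$ is valid (it can only decrease the terms, while the min over $I$ is bounded below by $A\odot b = b$), facts (A) and (B) are right, the ``if'' direction is the straightforward sandwich, and the descent in the ``only if'' direction (tightness composing via $A_{ij}\le A_{ik}+A_{kj}$, the exclusion $i_1\ne j_0$ from strict positivity of off-diagonal entries, and the witness coordinate $\ell$ supplied by minimality of $I$) is airtight. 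The one substantive remark is that your reduction ``after shrinking $I$'' is not a harmless normalization but an extra hypothesis: the ``only if'' direction of the statement is genuinely false for redundant representations (e.g.\ for $n=2$, $A$ with off-diagonal entries $1$, $b=(0,1)$ can be written as $0\odot A_1\oplus 5\odot A_2$ with $I=\{1,2\}$, yet $(0,2)$ still solves $A\odot x=b$), so irredundancy of the representation must be read into the theorem, exactly as you do; this matches the intended source result, where the solution set is described in terms of the active (essential) columns. It would be cleaner to state this minimality assumption as a hypothesis up front rather than as a step ``without loss of generality,'' but with that reading your argument proves the theorem in the form the paper uses it.
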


The above theorem implies that $A$ is a projection of $\mathbb{TP}^{n-1}$ onto its image, which is the tropical convex hull This is illustrated in Figure \ref{figure:polytrope_map} for a $3\times 3$ polytrope with the columns in $\mathbb{TP}^2$ as dots and their tropical convex hull in grey.  As the matrix acts on $\mathbb{TP}^2$, the three columns of the matrix are fixed, as is their tropical convex hull.  The remainder of the plane except for three rays is divided into three regions that are mapped in the directions $(0,-1)$, $(-1,0)$, or $(1,1)$.  This maps each point to an upside-down tropical line with center at one of the three columns.  The rays of these lines that are not in the tropical convex hull are mapped to the point at the center of the tropical line.  
 \begin{figure}[h]
\centering
\includegraphics[scale=0.7]{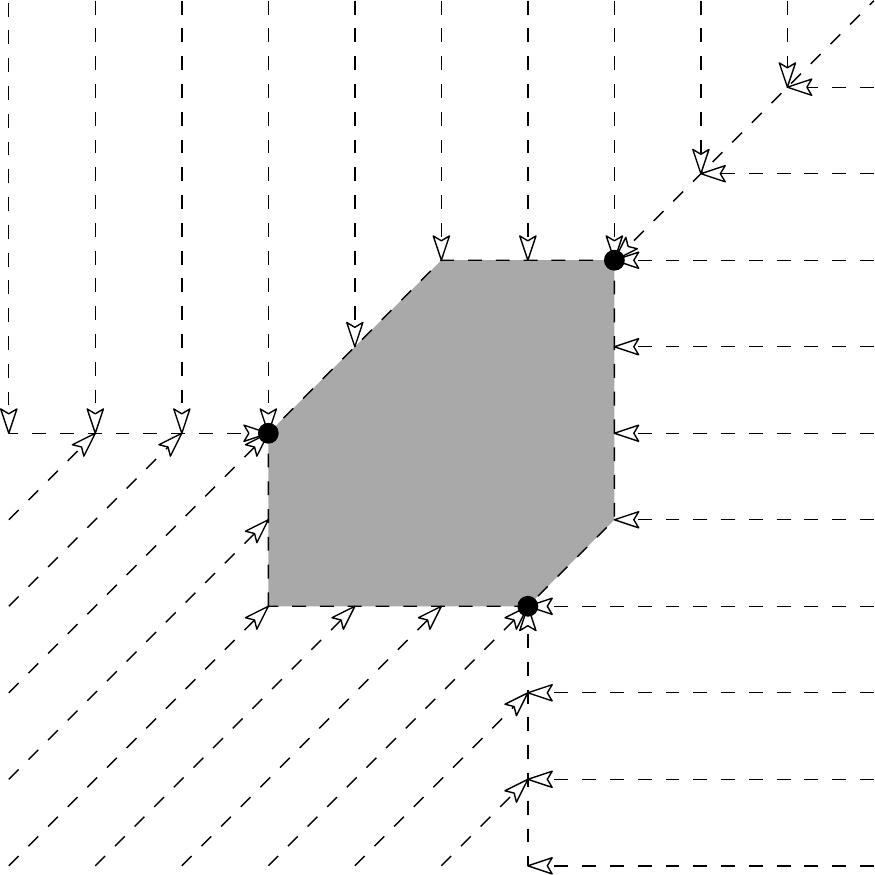}
\label{figure:polytrope_map}
\caption{The action of a polytrope on $\mathbb{TP}^2$.}
\end{figure}

Note that $A\odot B = B\odot A$ means for each $i = 1, \ldots, n$, the projection of the $i$-th column of $B$ onto the image of $A$ equals the projection of the $i$-th column of $A$ onto the image of $B$. Thus, Theorem \ref{thm:image} gives an easy geometric check if two polytropes commute. We now give explicit examples for $n = 3$,  using three polytropes whose images are illustrated in Figure \ref{figure:polytropes}.  Let $A$, $B$, and $C$ have the images of their columns labelled by dots, boxes, and crosses, respectively. 

\begin{figure}[h]
\centering
\includegraphics[width=0.37\textwidth]{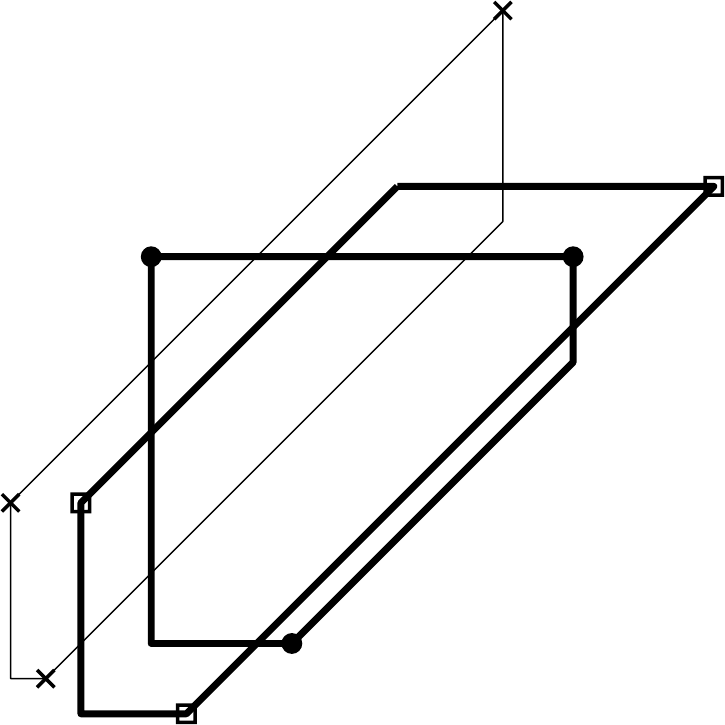}
\label{figure:polytropes}
\caption{The images of three polytropes for $n = 3$}
\end{figure}

\begin{example}\rm{The matrices $A$ and $B$ (in bold) commute.  Consider $\text{im}(A)\cap \text{im}(B)$, which is a hexagon.  The vertices of the hexagon are the vertices of $\text{im}(A \odot B)$; to see this, simply map the columns of $B$ to $\text{im}(A)$ in the natural way.  Similarly, the vertices of the hexagon are the vertices of $\text{im}(B \odot A)$.  It follows that $A \odot B=B \odot A$.

}
\end{example}

\begin{example}\rm{The matrices $A$ and $C$ do not commute.  The pentagon $\text{im}(A)\cap \text{im}(C)$ is \emph{not} $\text{im}(A \odot C)$ (or $\text{im}(C \odot A)$).  For instance, the upper-right cross vertex is not mapped to this intersection by the action of $A$.  This means that $A$ and $C$ do not commute.

}
\end{example}

We close this section by collecting some useful facts about premetrics. Only the last two statements are new, and they are needed for the proof of Theorem \ref{main_theorem}. Therefore, we only prove those statements.
\begin{lemma}\label{lem:fixedpoint}
If $A, B \in \mathbb{R}^{n \times n}$ are premetrics, then the following hold:
\begin{enumerate}
	\item $A\odot B \leq A \oplus B$.
	\item $A^{\odot (n-1)} = A^\ast$.
	\item $A^{\odot 2} = A$ if and only if $A = A^\ast$. 
	\item $A\odot x = x$ if and only if $x$ is in the image of $A^\ast$. 
	\item $\text{im}((A\odot B)^\ast) = \text{im}((A \oplus B)^\ast) = \text{im}(A^\ast) \cap \text{im}(B^\ast)$.
	\item $(A\odot B)^\ast = (B\odot A)^\ast = (A \oplus B)^\ast.$
\end{enumerate}
\end{lemma}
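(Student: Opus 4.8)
The six statements build on one another, so the plan is to prove them in order, leaning on the earlier parts (and on Theorem \ref{thm:image}) to get the later ones cheaply.

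*Parts (1)--(3).* For part (1), I would simply write out $(A\odot B)_{ij} = \min_s (A_{is} + B_{sj})$ and bound it above by the two terms $s=i$ (giving $A_{ii}+B_{ij} = B_{ij}$) and $s=j$ (giving $A_{ij}+B_{jj} = A_{ij}$), so $(A\odot B)_{ij} \le \min(A_{ij},B_{ij}) = (A\oplus B)_{ij}$; the diagonal is $0$ on both sides. Part (2) is standard: since $A$ is a premetric it has tropical eigenvalue $0$, so no power introduces negative diagonal entries and the Kleene star stabilizes; because $A_{ij}\ge 0$ off-diagonal and $A_{ii}=0$, the minimal weight of a walk of length $\le n$ between two vertices is already achieved by a walk of length $\le n-1$ (a shortest walk visits no vertex twice, except possibly padding with the zero-cost self-loop), hence $A^{\ast} = I\oplus A\oplus\dots\oplus A^{\odot n} = A^{\odot(n-1)}$, absorbing $I$ into $A^{\odot(n-1)}$ via the zero diagonal. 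For part (3), if $A = A^{\odot 2}$ then $A^{\odot k}=A$ for all $k\ge 1$, so $A^{\ast} = I\oplus A = A$ (again the zero diagonal of $A$ absorbs $I$); conversely if $A=A^{\ast}$ then $A$ is a polytrope, so $A^{\odot 2}=A$ by the characterization quoted after the definition of polytrope.

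*Part (4).* This is where Theorem \ref{thm:image} does the work. If $x\in\text{im}(A^{\ast})$, then since $A^{\ast}$ is a polytrope we have $A^{\ast}\odot A^{\ast} = A^{\ast}$, and one checks $A\odot A^{\ast} = A^{\ast}$ directly from $A^{\ast} = A\oplus\dots\oplus A^{\odot n}$ together with $A\odot I = A$; hence $A\odot x = A\odot(A^{\ast}\odot y) = A^{\ast}\odot y = x$. Conversely, if $A\odot x = x$ then iterating gives $A^{\odot k}\odot x = x$ for all $k$, so $A^{\ast}\odot x = x$ and $x\in\text{im}(A^{\ast})$. (Alternatively: the eigenvectors of a premetric with eigenvalue $0$ are exactly the image of the Kleene star, which is the cited fact I am allowed to invoke, but I would prefer the self-contained argument.)

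*Parts (5)--(6), the main obstacle.* These are the substantive claims, and I expect part (5) to be the crux; part (6) then follows formally. For (5), the key is part (4): $\text{im}((A\odot B)^{\ast})$ is the set of $x$ fixed by $A\odot B$, and $\text{im}(A^{\ast})\cap\text{im}(B^{\ast})$ is the set of $x$ fixed by both $A$ and $B$. The inclusion $\supseteq$ is immediate: if $A\odot x = x$ and $B\odot x = x$ then $(A\odot B)\odot x = x$, and likewise $(A\oplus B)\odot x = (A\odot x)\oplus(B\odot x) = x$ (using that $\odot$ distributes over $\oplus$ and that $A\oplus B$ applied to $x$ is the tropical sum of the two images). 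For the reverse inclusion I would use part (1): from $A\odot B \le A\oplus B$ we get $(A\odot B)^{\odot k}\le(A\oplus B)^{\odot k}$ for all $k$, hence $(A\odot B)^{\ast}\le(A\oplus B)^{\ast}$, so $\text{im}((A\odot B)^{\ast})\supseteq\text{im}((A\oplus B)^{\ast})$ — wait, the inequality of matrices reverses set inclusion of images in the wrong direction, so I must be careful: instead I will argue that if $(A\odot B)\odot x = x$ then $x \ge A\odot(B\odot x) \ge A\odot x \ge x$ forces $A\odot x = x$ and symmetrically $B\odot x = x$, pinning $x$ into $\text{im}(A^{\ast})\cap\text{im}(B^{\ast})$; here I use $A\odot B \le A\oplus B\le A$ entrywise on the diagonal-normalized estimates, i.e. $A\odot(B\odot x)\le B\odot x$ type bounds coming from $A\odot B\le B$ and $A\odot B\le A$. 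The same squeeze handles $A\oplus B$ in place of $A\odot B$. Once all three Kleene stars have the same image, and each is itself a polytrope (a Kleene star of a premetric is a polytrope), part (6) follows because a polytrope is determined by its image together with the fact that it acts as the identity-like projection onto that image: more precisely, if two polytropes $P,Q$ have $\text{im}(P)=\text{im}(Q)=:V$, then by part (4) both act as the identity on $V$, and by Theorem \ref{thm:image} both act as the nearest-point projection onto $V$ in the appropriate sense, so $P = Q$; applying this to $P = (A\odot B)^{\ast}$, $Q=(A\oplus B)^{\ast}$, and likewise $(B\odot A)^{\ast}$, gives (6). The delicate point throughout is keeping the direction of the order-reversal straight when passing between matrix inequalities and image inclusions, and confirming that the "polytrope determined by its image" step is legitimate — this is the part I would write out most carefully.
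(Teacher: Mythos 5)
Your proposal is correct in substance, but it takes a partly different route from the paper, so let me compare. The paper proves only parts (5)--(6), explicitly treating (1)--(4) as known facts about premetrics. For (5) it invokes the polyhedral characterization $\text{im}(A^\ast)=\{x\in\mathbb{TP}^{n-1}: x_i-x_j\le A_{ij}\}$ from \cite{JK}: this gives $\text{im}(A^\ast)\cap\text{im}(B^\ast)=\text{im}((A\oplus B)^\ast)$ immediately, and the inclusion $\text{im}((A\odot B)^\ast)\subseteq\text{im}((A\oplus B)^\ast)$ from the entrywise bound $(A\odot B)_{ij}\le\min(A_{ij},B_{ij})$; the fixed-point statement (4) is used only for the remaining inclusion $\text{im}(A^\ast)\cap\text{im}(B^\ast)\subseteq\text{im}((A\odot B)^\ast)$. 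You instead run every inclusion through (4) with monotonicity and squeeze arguments, which is more self-contained (no external description of $\text{im}(A^\ast)$ is needed) at the price of having to keep inequality directions straight. Your treatment of (6) --- two Kleene stars with the same image coincide because a polytrope acts on $\R^n$ as the canonical projection $x\mapsto\max\{y\in\text{im}(P):y\le x\}$, which depends only on the image --- supplies exactly the step the paper leaves implicit when it says (6) is ``the matrix multiplication version'' of (5).

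Two repairs are needed before this is watertight. First, the displayed chain in your part (5) has its middle inequalities reversed: it is not true in general that $A\odot(B\odot x)\ge A\odot x$ or that $A\odot x\ge x$; since premetrics have zero diagonal and $A\odot(\cdot)$ is monotone, the true bounds are $A\odot(B\odot x)\le B\odot x\le x$ and $A\odot(B\odot x)\le A\odot x\le x$. The hypothesis $x=(A\odot B)\odot x$ then forces $B\odot x=x$ and in turn $A\odot x=A\odot(B\odot x)=x$, which is the squeeze you intend (and the correct ingredient bounds do appear in your parenthetical remark), but the chain as written must be corrected. Second, two small points to state explicitly: to apply (4) to $A\odot B$, $B\odot A$ and $A\oplus B$ you should note that these are again premetrics ($(A\odot B)_{ii}\le A_{ii}+B_{ii}=0$ with all summands nonnegative, and off-diagonal entries strictly positive); and in (6) the identity $P\odot x=\max\{y\in\text{im}(P):y\le x\}$ for a polytrope $P$ should be written out (it follows from $P\odot x\le x$, $P\odot x\in\text{im}(P)$, part (4), and monotonicity), after which ``a polytrope is determined by its image'' is airtight. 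With these fixes your argument is complete and valid.
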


\begin{proof}
The last statement is the matrix multiplication version of the second last, so let us prove the later. By one characterization of the Kleene star, \cite{JK}
\begin{align*}
\text{im}(A^\ast) &= \{x \in \mathbb{TP}^{n-1}: x_i - x_j \leq A_{ij}\} \\
\text{im}(B^\ast) &= \{x \in \mathbb{TP}^{n-1}: x_i - x_j \leq B_{ij}\}.
\end{align*}
This implies
$$ \text{im}(A^\ast) \cap \text{im}(B^\ast) = \{x \in \mathbb{TP}^{n-1}: x_i - x_j \leq \min(A_{ij}, B_{ij})\} = \text{im}((A\oplus B)^\ast). $$
Consider the first equality, that is, the claim that $\text{im}((A\odot B)^\ast) = \text{im}((A \oplus B)^\ast)$. As before,
$$ \text{im}((A\odot B)^\ast) = \{x \in \mathbb{TP}^{n-1}: x_i - x_j \leq (A\odot B)_{ij}\}  $$
Now, $(A\odot B)_{ij} = \min_k A_{ik} + B_{kj} = \min\{A_{ij}, B_{ij}, \min_{k \neq i,j} A_{ik} + B_{kj}\}.$ Thus
$$ \text{im}((A\odot B)^\ast) \subseteq \text{im}((A\oplus B)^\ast).$$
Conversely, suppose that $x \in \text{im}(A^\ast) \cap \text{im}(B^\ast)$. By the fourth statement of the lemma,
$$ A\odot B\odot x = B\odot A\odot x = x, $$
therefore $x \in \text{im}((A\odot B)^\ast).$ So $\text{im}(A^\ast) \cap \text{im}(B^\ast) \subseteq \text{im}((A\odot B)^\ast)$. This proves the desired equality.
\end{proof}

\subsection{Proof of the main theorem}

With the results from the previous section, we are now ready to prove our main result.

\begin{proof}[Proof of Theorem \ref{main_theorem}] $\,$ \\
Suppose that $A \oplus B = (A \oplus B)^\ast$. By Lemma \ref{lem:fixedpoint}, $A \oplus B = (A \oplus B)^2$. We have
$$ A \oplus B = A^{\odot 2} \oplus B^{\odot 2} \oplus A\odot B \oplus B \odot A = A \oplus B \oplus A\odot B \oplus B \odot A.$$
This implies $A \oplus B \leq A\odot B, B\odot A$. By Lemma \ref{lem:fixedpoint}, $A\odot B, B\odot A \leq A \oplus B$. So we must have
$$ A\odot B, B\odot A = A \oplus B, $$
which then implies $A\odot B = B\odot A$. Now, suppose that $A\odot B = B\odot A$. For any $m \geq 2$,C
$$ (A \oplus B)^{\odot m} = \bigoplus_{k=1}^m A^{\odot k}\odot B^{m-k} = \bigoplus_{k=1}^mA \odot B = A \oplus B \oplus A\odot B = A\odot B $$
Therefore, $ (A \oplus B)^{\odot 2} = (A \oplus B)^{\ast}.$
\end{proof}

\begin{corollary}\label{corollary:three}
For $n = 3$, $A\odot B = B\odot A$ if and only if $A \oplus B = (A \oplus B)^\ast$.
\end{corollary}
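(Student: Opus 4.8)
The forward implication needs nothing new: if $A \oplus B = (A \oplus B)^\ast$ then $A \odot B = B \odot A$ by the first sentence of Theorem~\ref{main_theorem}, which is valid for every $n$. So the plan is to prove the converse when $n = 3$. Assume $A \odot B = B \odot A$. Since $A$ and $B$ are polytropes, $A^{\odot 2} = A$ and $B^{\odot 2} = B$, so $(A \oplus B)^{\odot 2} = A \oplus B \oplus (A \odot B) = A \odot B$, the last step using part (1) of Lemma~\ref{lem:fixedpoint}; and $(A \oplus B)^{\odot 2} = (A \oplus B)^\ast$ by Theorem~\ref{main_theorem}. Because $M := A \oplus B$ is a premetric, part (3) of Lemma~\ref{lem:fixedpoint} reduces the goal to showing $M = M^{\odot 2}$, and since $M^{\odot 2} = A \odot B \leq M$ entrywise, it suffices to prove the reverse entrywise inequality, i.e.\ that $M$ satisfies the triangle inequality and is therefore a polytrope.

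Next I would unwind the triangle inequality for $M$ in the case $n = 3$. Fix distinct indices $i, j, \ell$ of $[3]$, so $\ell$ is the unique index outside $\{i,j\}$. Then $M_{ij} = \min(A_{ij}, B_{ij})$ and $M_{i\ell} + M_{\ell j} = \min\{A_{i\ell}+A_{\ell j},\, A_{i\ell}+B_{\ell j},\, B_{i\ell}+A_{\ell j},\, B_{i\ell}+B_{\ell j}\}$, so $M_{ij} \leq M_{i\ell} + M_{\ell j}$ amounts to four scalar inequalities. Two of them, $M_{ij} \leq A_{i\ell}+A_{\ell j}$ and $M_{ij} \leq B_{i\ell}+B_{\ell j}$, hold because $A$ and $B$ are polytropes, and the remaining two, being interchanged by $A \leftrightarrow B$, reduce to the single claim $\min(A_{ij}, B_{ij}) \leq A_{i\ell}+B_{\ell j}$. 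To prove this, suppose instead $A_{i\ell}+B_{\ell j} < \min(A_{ij}, B_{ij})$ for some such triple. For $n=3$ one has $(A \odot B)_{ij} = \min\{A_{ij}, B_{ij}, A_{i\ell}+B_{\ell j}\}$ and $(B \odot A)_{ij} = \min\{A_{ij}, B_{ij}, B_{i\ell}+A_{\ell j}\}$; the hypothesis makes the first minimum equal to $A_{i\ell}+B_{\ell j}$, and equating it with the second (which is forced to the same value, strictly below $\min(A_{ij},B_{ij})$) gives $B_{i\ell}+A_{\ell j} = A_{i\ell}+B_{\ell j}$. Adding the polytrope inequalities $A_{ij} \leq A_{i\ell}+A_{\ell j}$ and $B_{ij} \leq B_{i\ell}+B_{\ell j}$ then yields $A_{ij} + B_{ij} \leq (A_{i\ell}+A_{\ell j})+(B_{i\ell}+B_{\ell j}) = (A_{i\ell}+B_{\ell j})+(B_{i\ell}+A_{\ell j}) = 2(A_{i\ell}+B_{\ell j})$, contradicting $A_{ij}, B_{ij} > A_{i\ell}+B_{\ell j}$. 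Hence $M$ is a polytrope, and the chain above closes.

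The coordinate bookkeeping is routine; the one substantive move is the final step, where commutativity lets one replace the mixed sum $A_{i\ell}+B_{\ell j}$ by the symmetric half-sum $\tfrac{1}{2}[(A_{i\ell}+A_{\ell j})+(B_{i\ell}+B_{\ell j})]$, which is exactly what permits the separate triangle inequalities for $A$ and for $B$ to be added. I expect the main obstacle to be confirming that $n = 3$ is genuinely essential: for $n \geq 4$ the entry $(A\odot B)_{ij}$ involves several intermediate indices, the identity $(A\odot B)_{ij} = (B\odot A)_{ij}$ no longer isolates a single mixed term, and $A \oplus B$ can fail to be a polytrope for commuting polytropes $A,B$, so one should check that the reduction ``$A\odot B = B\odot A \Rightarrow A\oplus B$ is a polytrope'' is special to small $n$ and not an accidental over-claim. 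I would also remark explicitly that the diagonal instances of the triangle inequality for $M$ are trivial since $M$ is a premetric, so no degenerate index collisions arise.
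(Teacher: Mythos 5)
Your proof is correct and takes essentially the same approach as the paper: both reduce the converse to showing that $A \oplus B$ satisfies the triangle inequality, and both exploit the fact that for $n=3$ there is a unique intermediate index $\ell$, so $(A\odot B)_{ij}$ isolates a single mixed term $A_{i\ell}+B_{\ell j}$. The only difference is in the last step of the contradiction: the paper uses $(A\odot B)_{ij}$ and $(B\odot A)_{ij}$ separately to derive the two opposing strict inequalities $A_{\ell j}>B_{\ell j}$ and $A_{\ell j}<B_{\ell j}$, whereas you first deduce the equality $A_{i\ell}+B_{\ell j}=B_{i\ell}+A_{\ell j}$ from commutativity and then add the two polytrope triangle inequalities to get $A_{ij}+B_{ij}\leq 2(A_{i\ell}+B_{\ell j})$, contradicting the assumed strict inequalities. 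This is a slightly cleaner packaging of the same cancellation; it does not constitute a genuinely different route.
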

\begin{proof}
The theorem supplies the ``if'' direction. For the converse, note that $A \odot B = B \odot A$ implies
$$ (A \oplus B)^2 = A \oplus B \oplus A\odot B = A\odot B. $$
Now, suppose for the sake of contradiction that $A\odot B$ is strictly smaller than $A \oplus B$ at some coordinate, say, $(1,2)$. That is,
$$ (A\odot B)_{12} = \min \{A_{11} + B_{12}, A_{12} + B_{22}, A_{13} + B_{32}\} $$
But $A$ and $B$ have zero diagonals, and so
$$ (A\odot B)_{12} = \min \{B_{12}, A_{12}, A_{13} + B_{32}\}.$$
For strict inequality to occur, we necessarily have $(A\odot B)_{12} = A_{13} + B_{32}$. But $A$ and $B$ are polytropes, so 
$$ A_{12} \leq A_{13} + A_{32}, B_{12} \leq B_{13} + B_{32}.$$
Therefore, 
$$ A_{32} > B_{32}, \hspace{1em} A_{13} < B_{13}. $$
On the other hand, $(A\odot B)_{12} = (B\odot A)_{12}$, and by the same argument, we necessarily have
$$ (B\odot A)_{12} = B_{13} + A_{32} < B_{12} < B_{13 } + B_{32},$$
which implies $A_{32} < B_{32}$, a contradiction. Hence there is no coordinate $(i,j) \in [3]\times[3]$ such that $A \odot B_{ij} < A_{ij} \oplus B_{ij}$. In other words, $A\odot B = A \oplus B$, which then implies $A \oplus B = (A \oplus B)^2$.
\end{proof}

Theorem \ref{main_theorem} implies the set inclusion 
$$ \{(A,B): A \oplus B = (A \oplus B)^\ast\} \subseteq \{(A,B): A\odot B = B\odot A\} \subseteq \{(A,B): (A \oplus B)^2 = (A \oplus B)^\ast\}. $$
For $n = 3$, the corollary implies
$$ \{(A,B): A \oplus B = (A \oplus B)^\ast\} = \{(A,B): A\odot B = B\odot A\} \subset \{(A,B): (A \oplus B)^2 = (A \oplus B)^\ast\} = \R^{2n^2}.$$

These inclusions are strict for $n \geq 4$. Consider the following two examples for $n = 4$.


\begin{example} \rm{ [$A\odot B = B\odot A$ but $A \oplus B > (A \oplus B)^\ast$]

Let
$$ A = 
\left[
\begin{array}{cccc}
0.00 & 4.10 & 3.43 & 0.95 \\
4.94 & 0.00 & 1.20 & 5.89 \\
3.74 & 4.44 & 0.00 & 4.69 \\
3.39 & 6.92 & 2.48 & 0.00
\end{array} \right], \hspace{1em}
B = 
\left[
\begin{array}{cccc}
0.00 & 1.11 & 8.21 & 9.02 \\
6.74 & 0.00 & 7.61 & 9.82 \\
9.96 & 9.56 & 0.00 & 9.77 \\
1.03 & 2.14 & 1.36 & 0.00
\end{array}\right] .
 $$
One can check that $A\odot B = B\odot A$, but $A \oplus B$ differs from $(A \oplus B)^2$ in the $(1,2)$ entry: 
$$ (A \oplus B)_{13} = 3.43 > (A\oplus B)^2_{13} = 2.31.$$

}
\end{example}

\begin{example}[$(A \oplus B)^2 = (A \oplus B)^\ast$ but $A\odot B \neq B\odot A$]
Let
$$ A = 
\left[
\begin{array}{cccc}
0.00 & 1.09 & 4.02 & 3.33 \\
6.77 & 0.00 & 2.93 & 3.47 \\
7.77 & 8.00 & 0.00 & 6.20 \\
3.30 & 1.85 & 1.39 & 0.00
\end{array} \right], \hspace{1em}
B = 
\left[
\begin{array}{cccc}
0.00 & 5.02 & 1.45 & 2.58 \\
3.53 & 0.00 & 2.01 & 2.12 \\
7.10 & 3.57 & 0.00 & 1.13 \\
7.71 & 6.04 & 2.47 & 0.00
\end{array}\right] .
 $$

\end{example}

The following is an example for $n = 3$ that shows that it is not sufficient to have $A\odot B = A \oplus B$: one needs $A \oplus B = (A\odot B) \oplus (B\odot A)$ for $A$ and $B$ to commute. 

\begin{example}\rm{
$$ A = 
\left[
\begin{array}{ccc}
0.00 &  6.4 & 6.10 \\
3.01 &  0.0 & 0.54 \\
5.41 &  2.4 & 0.00
\end{array} \right], \hspace{1em}
B = 
\left[
\begin{array}{ccc}
0.00 & 2.25 & 5.04 \\
6.81 & 0.00 & 2.79 \\
4.02 & 6.27 & 0.00
\end{array}\right] .
 $$

In this case, $A\odot B = A \oplus B$, but $B\odot A \neq A \oplus B$. These two matrices differ in the $(1,3)$ coordinate
$$ (B\odot A)_{13} = 2.79 < (A \oplus B)_{13} = 5.04, $$
so in particular, $A\odot B \neq B\odot A$.
}
\end{example}

\section{Tropicalization of the classical commuting variety}\label{sec:tc}

Let $k$  be an algebraically closed non-Archimedean field with non-trivial valuation, such as the Puiseux series over $\mathbb{C}$, and fix an integer $n\geq 2$.  Let $S_n=k[\{x_{ij},y_{ij}\}_{i,j\in\{1,\ldots,n\}}]$ and let $I_n\subset S$ be the ideal generated by the $n^2$ elements of the form
\begin{equation}
\label{classical_generators}
\sum_{i=1}^nx_{ik}y_{kj}-\sum_{j=1}^nx_{\ell j}y_{j\ell}
\end{equation}
where $k,\ell\in \{1,\ldots,n\}$.  We call the variety $V(I_n)$ the $n\times n$ \emph{commuting variety} over $k$.  It is irreducible and has dimension $n^2+n$ \cite{GS, MT}.  Its classical points correspond to pairs of matrices $X,Y\in k^{n\times n}$ that commute.  
Since $k$ is algebraically closed, we may identify the variety with pairs of commuting matrices.  The situation is more subtle tropically. As in the introduction, we consider three tropical spaces:

\begin{itemize}

\item The  \emph{tropical commuting set} $\mathcal{TS}_n$, which is the collection of all pairs of $n\times n$  tropical commuting matrices in $\R^{2n^2}$.

\item  The \emph{tropical commuting variety} $\mathcal{TC}_n$, which is the tropicalization of the commuting variety.

\item The \emph{tropical commuting prevariety} $\mathcal{T}_{{\rm pre}, n}$, which is the tropical prevariety defined by the $n^2$ equations in (\ref{classical_generators}).

\end{itemize}

The tropicalization of a variety over such a field can be defined as the Euclidean closure of the image of the variety under coordinate-wise valuation.  
For completeness we recall an alternate definition of a tropical variety:  for $\omega = (\omega^x, \omega^y) \in \R^{n^2} \times \R^{n^2}$, $f \in \R[x_{ij}, y_{ij}]$, let $in_\omega(f)$ denote the initial form of $f$, $in_w(I_n) := \langle in_w(f): f \in I_n \rangle$ the initial ideal of $I_n$. The tropical variety $\mathcal{T}(I_n)$ is the subcomplex of the Gr\"{o}bner fan of $I_n$ consisting of cones $C_\omega$ where $in_\omega(I_n)$ does not contain a monomial. 

Our first result concerns the homogeneity space of $\mathcal{T}(I_n)$, denoted $\mathrm{homog}(I_n)$ This is the set of $\omega \in \R^{2n^2}$ such that $in_\omega(I_n) = I_n$. In our case, this set is a subspace of dimension $n+1$, which coincides with the lineality space of the Gr\"{o}bner fan of $I_n$. 

\begin{proposition}
Suppose $n \geq 3$. For $\omega = (\omega^x, \omega^y) \in \R^{n^2} \times \R^{n^2}$, $\omega \in \mathrm{homog}(I_n)$ if and only if there exists $a, b \in \R$ and $c \in \R^n$ such that for all $i,j \in [n]$
\begin{equation}\label{eqn:homog}
\omega^x_{ii} = a, \omega^y_{jj} = b, \omega^y_{ij} = \omega^x_{ij} -a+b, \mbox{ and } \omega^x_{ij} = c_i - c_j + a.
\end{equation}
In particular, $\mathrm{homog}(I_n)$ has dimension $n+1$ for $n \geq 3$. For $n = 2$, $\omega \in \mathrm{homog}(I_2)$ if and only if there exist $a,b \in \R$ such that
\begin{equation}\label{eqn:homog2}
\omega^x_{11} = \omega^x_{22} = a, \omega^y_{11} = \omega^y_{22} = b, \omega^y_{12} = \omega^x_{12} - a + b, \mbox{ and } \omega^y_{21} = \omega^x_{21} - a + b.
\end{equation}
In particular, $\mathrm{homog}(I_2)$ has dimension $4$.
\end{proposition}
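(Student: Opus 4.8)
The plan is to use the standard fact that $\mathrm{in}_\omega(I_n) = I_n$ if and only if $I_n$ is generated by $\omega$-homogeneous polynomials, equivalently --- since $I_n$ is generated by the elements $f_{k\ell} := (XY-YX)_{k\ell} = \sum_{i=1}^n\bigl(x_{ki}y_{i\ell} - y_{ki}x_{i\ell}\bigr)$ of (\ref{classical_generators}) --- if and only if every $\omega$-homogeneous component of every $f_{k\ell}$ lies in $I_n$. The ``if'' direction of the proposition is then immediate: for $\omega$ as in (\ref{eqn:homog}) one has $\omega^x_{ij} = c_i - c_j + a$ and $\omega^y_{ij} = c_i - c_j + b$, so every monomial of $f_{k\ell}$ has $\omega$-degree $c_k - c_\ell + a + b$, whence each $f_{k\ell}$ is $\omega$-homogeneous and $\omega \in \mathrm{homog}(I_n)$; the same computation with the weaker relations (\ref{eqn:homog2}) handles $n = 2$. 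The dimension counts follow by inspecting the free parameters: for $n \ge 3$ these are $a,b \in \R$ and $c \in \R^n$, subject only to the redundancy that $c$ and $c + t(1,\dots,1)$ give the same $\omega$; for $n = 2$ they are $a,b,\omega^x_{12},\omega^x_{21}$, all independent.

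For the ``only if'' direction the key step is the lemma: \emph{if $k \ne \ell$, then the only polynomials in $I_n$ whose monomials all lie among the $2n$ monomials $x_{ki}y_{i\ell},\,y_{ki}x_{i\ell}$ ($i \in [n]$) of $f_{k\ell}$ are the scalar multiples of $f_{k\ell}$.} To prove it, take such a $g = \sum_i c_i\, x_{ki}y_{i\ell} - \sum_i c'_i\, y_{ki}x_{i\ell} \in I_n$. For every $W \in k^{n\times n}$ the pair $(W,W^2)$ lies in the commuting variety, so $g(W,W^2) = 0$; expanding $(W^2)_{pq} = \sum_m w_{pm}w_{mq}$ yields the polynomial identity
\[
0 = g(W,W^2) = \sum_{a,b \in [n]} (c_a - c'_b)\, w_{ka}\,w_{ab}\,w_{b\ell}
\]
in the entries of $W$. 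A short combinatorial check --- which uses $k \ne \ell$ --- shows that the cubic monomials $w_{ka}w_{ab}w_{b\ell}$, as $(a,b)$ ranges over $[n]^2$, are pairwise distinct. Hence every coefficient $c_a - c'_b$ vanishes, so all $c_a$ and all $c'_b$ equal a common scalar and $g \in k\cdot f_{k\ell}$.

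Now suppose $\omega \in \mathrm{homog}(I_n)$. For each $k \ne \ell$, every $\omega$-homogeneous component of $f_{k\ell}$ lies in $I_n$ and is supported on the monomials of $f_{k\ell}$, so by the lemma it is a scalar multiple of $f_{k\ell}$; but a nonzero proper sub-sum of the $2n$ distinct monomials of $f_{k\ell}$ cannot be a scalar multiple of $f_{k\ell}$, so $f_{k\ell}$ has a single $\omega$-homogeneous component, i.e.\ it is $\omega$-homogeneous. Equivalently, $\omega^x_{ki} + \omega^y_{i\ell}$ and $\omega^y_{ki} + \omega^x_{i\ell}$ are both independent of $i$ and equal, for all $k \ne \ell$. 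It remains to solve this linear system. For $n \ge 3$, comparing ``$\omega^x_{ki} + \omega^y_{i\ell}$ independent of $i$'' for a fixed $k$ and two indices $\ell \ne \ell'$ with $k \notin \{\ell,\ell'\}$ shows $\omega^y_{i\ell} - \omega^y_{i\ell'}$ is independent of $i$, hence $\omega^y_{i\ell} = \rho_i + \sigma_\ell$; substituting back gives $\omega^x_{ki} = \tau_k - \rho_i$, and imposing $\omega^x_{ki} + \omega^y_{i\ell} = \omega^y_{ki} + \omega^x_{i\ell}$ forces $\tau_k - \rho_k$ and $\rho_\ell + \sigma_\ell$ to be constants, which is exactly (\ref{eqn:homog}) with $c_i = \rho_i$. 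For $n = 2$ only $f_{12}$ and $f_{21}$ enter, and writing out their $\omega$-homogeneity directly gives $\omega^x_{11} = \omega^x_{22}$, $\omega^y_{11} = \omega^y_{22}$, $\omega^y_{12} = \omega^x_{12} - a + b$, $\omega^y_{21} = \omega^x_{21} - a + b$, i.e.\ (\ref{eqn:homog2}); no ``potential'' relation among the off-diagonal entries is forced, which is why $\dim \mathrm{homog}(I_2) = 4$.

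The main obstacle is the lemma of the second paragraph: one must produce a family of commuting pairs rich enough to separate all $2n$ monomials of $f_{k\ell}$ --- the pairs $(W,W^2)$ do this --- and then verify the combinatorial claim that the cubic monomials $w_{ka}w_{ab}w_{b\ell}$ do not collide when $k \ne \ell$. Once each $f_{k\ell}$ is known to be $\omega$-homogeneous, the remaining step is routine linear algebra, and the $\omega$-homogeneity reformulation used at the start is standard.
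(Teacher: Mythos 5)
Your proof is correct, and while the ``if'' direction and the eventual linear algebra agree in substance with the paper's computation, your treatment of the ``only if'' direction follows a genuinely different (and more complete) route. The paper reduces the proposition to the claim that each generator $g_{ij}$ satisfies $in_\omega(g_{ij})=g_{ij}$, shows this condition is equivalent to (\ref{eqn:homog}) by comparing weights of specific monomial pairs (e.g.\ $x_{ii}y_{ij}$ against $y_{ij}x_{jj}$, and $x_{ik}y_{kj}$ against $x_{ij}y_{jj}$), and then uses only the easy implication that homogeneous generators give $in_\omega(I_n)=I_n$; the converse implication --- that $\omega\in\mathrm{homog}(I_n)$ forces these particular generators to be $\omega$-homogeneous, which is not automatic for an arbitrary generating set --- is left implicit there. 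Your key lemma supplies exactly this step: by evaluating an element of $I_n$ supported on the monomials of $f_{k\ell}$ (with $k\neq\ell$) at the commuting pairs $(W,W^2)$ and checking that the cubics $w_{ka}w_{ab}w_{b\ell}$ are pairwise distinct when $k\neq\ell$, you conclude that the only such elements are scalar multiples of $f_{k\ell}$, so each off-diagonal generator must be $\omega$-homogeneous whenever $\omega\in\mathrm{homog}(I_n)$ (and homogeneity of the off-diagonal generators suffices to pin down $\omega$). From that point on, your $\rho,\sigma,\tau$ parametrization of the weight equations is an equivalent reorganization of the paper's direct derivation of $a$, $b$ and $c$, and both arguments yield (\ref{eqn:homog}) for $n\ge 3$ and (\ref{eqn:homog2}) for $n=2$ with the stated dimensions $n+1$ and $4$. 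In short, your approach buys a self-contained justification of the reduction to generator-homogeneity, at the cost of the extra lemma; the paper's approach is shorter but leaves that reduction unproved.
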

\begin{proof}
We shall prove that $\omega$ satisfies (\ref{eqn:homog}) if and only if $in_\omega(g_{ij}) = g_{ij}$ for all $i,j \in [n]$. Since the $g_{ij}$'s generate $I_n$, this then implies $in_\omega(I_n) = in(I_n)$. 

Suppose $\omega$ is such that $in_\omega(g_{ij}) = g_{ij}$. For each fixed $i,j \in [n]$, the monomials $x_{ii}y_{ij}$ and $y_{ij}x_{jj}$ have equal weights. Thus $\omega^x_{ii} = \omega^x_{jj} = a$ for all $i,j \in [n]$. Similarly, $\omega^y_{ii} = \omega^y_{jj} = b$. Now, $x_{ii}y_{ij}$ and $x_{ij}y_{jj}$ have equal weights. Thus
\begin{equation}\label{eqn:equal.weights1}
\omega^y_{ij} = \omega^x_{ij} - \omega^x_{ii} + \omega^y_{jj} = \omega^x_{ij} - a + b
\end{equation}
Consider a triple $i,j,k \in [n]$ of distinct indices. The monomials $x_{ik}y_{kj}$ and $x_{ij}y_{jj}$ have equal weights. Thus
\begin{equation}
0 = \omega^x_{ik} + \omega^y_{kj} - (\omega^x_{ij} + b) = \omega^x_{ik} + (\omega^x_{kj} - a + b) - (\omega^x_{ij} + b) = \omega^x_{ik} + \omega^x_{kj} - a - \omega^x_{ij}.
\end{equation}
Since this holds for all triples $i,j,k \in [n]$, we necessarily have 
\begin{equation}\label{eqn:equal.weights2}
\omega^x_{ij} = c_i - c_j + a
\end{equation} 
for some $c \in \R^n$. Thus, $\omega$ is of the form given in (\ref{eqn:homog}).

Finally, for $n = 2$, (\ref{eqn:equal.weights1}) still holds. So we have (\ref{eqn:homog2}). 
Define $\omega^x_{12} = c$, $\omega^x_{21} = d$, we see that $\mathrm{homog}(I_2)$ is a linear subspace of $\R^8$ of dimension 4, parametrized by four parameters $a,b,c,d$.
\end{proof}

\subsection{The $2\times 2$ Tropical Commuting Variety}

The tropical variety $\mathcal{TC}_2$ lives in an $8$-dimensional ambient space, corresponding to the four $x_{ij}$ and the four $y_{ij}$ coordinates.  It is $6$-dimensional, with a $4$-dimensional lineality space.  Modding out by this lineality space gives a $2$-dimensional fan with f-vector 
$\left(\begin{matrix}1& 4 & 6\end{matrix}\right),$
meaning there are four rays and six $2$-dimensional cones.  The tropical variety is simplicial and pure.

Computation with \textsf{gfan} shows that the tropical prevariety equals the tropical variety. In other words, the following three polynomials
\begin{align*}
g_{11} &= x_{12}y_{21} - y_{12}x_{21}, \\
g_{12} &= x_{11}y_{12} + x_{12}y_{22} - y_{11}x_{12} - y_{12}x_{22}, \mbox{ and } \\
g_{21} &= x_{21}y_{11} + x_{22}y_{21} - y_{21}x_{11} - y_{22}x_{21}
\end{align*}
form a tropical basis for $\mathcal{T}(I_2)$.   We summarize this and slightly more in the following proposition.

\begin{proposition}\label{proposition_2x2}
We have $\mathcal{T}_{{\rm pre},2} = \mathcal{TC}_2 = \mathcal{TS}_2 \cap \{a_{12}+b_{21}=a_{21}+b_{12}\}$.
The homogeneity space is
$$ \omega^x_{11} = \omega^x_{22} = a, \omega^y_{11} = \omega^y_{22} = b, \omega^y_{12} = \omega^x_{12} - a + b, \mbox{ and } \omega^y_{21} = \omega^x_{21} - a + b.$$
\end{proposition}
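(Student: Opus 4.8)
The plan is to pin down the three sets by a chain of inclusions
$\mathcal{TC}_2 \subseteq \mathcal{TS}_2 \cap H \subseteq \mathcal{T}_{{\rm pre},2} = \mathcal{TC}_2$,
where $H := \{a_{12}+b_{21}=a_{21}+b_{12}\}$ is the indicated hyperplane; this squeezes $\mathcal{TC}_2$, $\mathcal{TS}_2\cap H$, and $\mathcal{T}_{{\rm pre},2}$ into a single set. The description of the homogeneity space is not part of this chain: it is exactly the $n=2$ instance of the preceding Proposition on $\mathrm{homog}(I_n)$, so it requires no new argument.

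I would first settle $\mathcal{T}_{{\rm pre},2} = \mathcal{TC}_2$. Since $\mathcal{TC}_2 \subseteq \mathcal{T}_{{\rm pre},2}$ holds by definition, this reduces to the assertion that $\{g_{11}, g_{12}, g_{21}\}$ is a tropical basis for $I_2$, which I would verify with \textsf{gfan} as in the text; the same run also returns the dimension, the $4$-dimensional lineality space, the f-vector $(1,4,6)$, and the fact that the complex is simplicial and pure. For $\mathcal{TC}_2 \subseteq \mathcal{TS}_2 \cap H$, the inclusion $\mathcal{TC}_2 \subseteq \mathcal{TS}_2$ is the one recorded in the introduction, while $\mathcal{TC}_2 \subseteq \mathcal{T}_{{\rm pre},2}$ is contained in the tropical hypersurface of the binomial $g_{11} = x_{12}y_{21}-y_{12}x_{21}$, which is precisely $H$.

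The remaining inclusion $\mathcal{TS}_2 \cap H \subseteq \mathcal{T}_{{\rm pre},2}$ is elementary. Given $(A,B)$ with $A\odot B = B\odot A$ and $a_{12}+b_{21}=a_{21}+b_{12}$, I must show that $\mathrm{trop}(g_{11})$, $\mathrm{trop}(g_{12})$, $\mathrm{trop}(g_{21})$ each attain their minimum at least twice at $(A,B)$. For $g_{11}$ this is exactly the hypothesis $(A,B)\in H$. For $g_{12}$, the $(1,2)$-entry of $A\odot B = B\odot A$ reads $\min\{a_{11}+b_{12},\, a_{12}+b_{22}\} = \min\{b_{11}+a_{12},\, b_{12}+a_{22}\} =: m$; one of the first two summands equals $m$ and one of the last two equals $m$, so $m$ is the overall minimum of the four monomials of $\mathrm{trop}(g_{12})$ and is attained at least twice. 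The same computation with the $(2,1)$-entry handles $g_{21}$. Hence $(A,B)\in\mathcal{T}_{{\rm pre},2}$, and the chain closes.

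I expect the \textsf{gfan} step to be the genuine obstacle: the remaining steps are short symbol manipulations, but the tropical-basis property of $\{g_{11},g_{12},g_{21}\}$ --- the thing that collapses the prevariety onto the variety --- has no short conceptual proof and has to come out of the Gr\"{o}bner-fan computation. The only subtlety in the elementary part is that the two monomials witnessing the minimum of $\mathrm{trop}(g_{12})$ must be chosen one from each side of the equation $A\odot B = B\odot A$, which is exactly what the identity ``$\min(u,v) = \min(w,z)$ if and only if $\min\{u,v,w,z\}$ is attained inside $\{u,v\}$ and inside $\{w,z\}$'' supplies.
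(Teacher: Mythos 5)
Your overall architecture matches the paper's proof: the \textsf{gfan} computation is what gives $\mathcal{T}_{{\rm pre},2}=\mathcal{TC}_2$ and the fan data, the homogeneity space is indeed just the $n=2$ case of the preceding proposition, and your two-monomial argument for $g_{12}$ and $g_{21}$ (with $g_{11}$ tropically satisfied exactly on $H$) is precisely how the paper proves $\mathcal{TS}_2\cap H\subseteq \mathcal{T}_{{\rm pre},2}$. That part of your write-up is correct, including the observation that the four monomials of $g_{12}$ are distinct so the minimum is genuinely attained twice.

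The genuine gap is the link $\mathcal{TC}_2\subseteq\mathcal{TS}_2$, which you dispose of by citing the introduction. The introduction asserts this without proof, and it is not a formal consequence of tropicalization: for a commuting pair $(X,Y)$ over $k$ one only has $\val\bigl((XY)_{ij}\bigr)\geq \min_k\bigl(\val(x_{ik})+\val(y_{kj})\bigr)$, and cancellation of leading terms can make this inequality strict on one side of $XY=YX$ but not the other. Concretely, take $X=\left(\begin{smallmatrix}1+t^{10} & 1\\ 1 & t^{3}\end{smallmatrix}\right)$ and $Y=X-I$ over the Puiseux series: they commute, all entries are nonzero, and their coordinatewise valuations are $A=\left(\begin{smallmatrix}0&0\\0&3\end{smallmatrix}\right)$, $B=\left(\begin{smallmatrix}10&0\\0&0\end{smallmatrix}\right)$, for which $(A\odot B)_{12}=0\neq 3=(B\odot A)_{12}$ (the leading terms of $x_{11}y_{12}$ and $x_{12}y_{22}$ cancel in $(XY)_{12}=t^{3}+t^{10}$). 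So $(A,B)\in\mathcal{TC}_2\setminus\mathcal{TS}_2$ and your squeeze cannot close at that link. Note moreover that this point lies in $H$ and in $\mathcal{T}_{{\rm pre},2}$ (each of $\mathrm{trop}(g_{11})$, $\mathrm{trop}(g_{12})$, $\mathrm{trop}(g_{21})$ attains its minimum twice at it), so the direction ``$\mathcal{T}_{{\rm pre},2}$, equivalently $\mathcal{TC}_2$, is contained in $\mathcal{TS}_2$'' is exactly the part that needs an argument; it is also the part the paper's own proof never addresses (the paper only proves $\mathcal{TS}_2\cap H\subseteq\mathcal{T}_{{\rm pre},2}$ and invokes \textsf{gfan}), and the example above indicates it fails as stated. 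You should flag this inclusion as unproven (and problematic) rather than import it from the introduction.
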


\begin{proof}
We have proven everything except the relationship between $\mathcal{TS}_2$ and the other spaces. If $(A,B)\in   \mathcal{TS}_2$, then two of the generators of our tropical basis, namely $g_{12}$ and $g_{21}$, are tropically satisfied.  The final generator $g_{11}$. is tropically satisfied if and only $a_{12}+b_{21}=a_{21}+b_{12}$, giving the claimed equality.
\end{proof}

\begin{example} \rm{ Let $k=\C\{\!\{\!t\!\}\!\}$ be the field of Puiseux series over $\C$ with the usual valuation.  Proposition \ref{proposition_2x2} tells us when commuting  $2\times 2$ tropical matrices with entries in $\val(k)$ can be lifted to commuting tropical matrices in $k$.  Since the pair of matrices \[\left(\left(
\begin{matrix} 
0 & 4
\\ 2 & 0
\end{matrix}
\right),
\left(
\begin{matrix} 
0 & 3
\\ 1 & -1
\end{matrix}
\right)\right)\] satisfies $4+1=2+3$, so they can be lifted, for instance to the pair of matrices
\[\left(\left(
\begin{matrix} 
1+t & t^4
\\ t^2 & 2
\end{matrix}
\right),
\left(
\begin{matrix} 
1 & t^3
\\ t & t^{-1}
\end{matrix}
\right)\right).\]
}
\end{example}

\begin{figure}[h]
\centering
\includegraphics[scale=1.1]{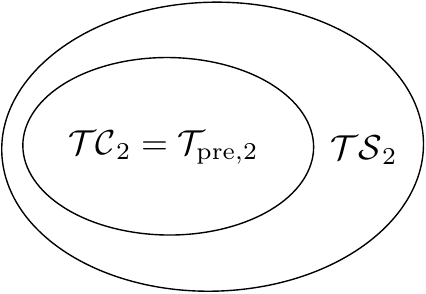}
\label{figure:three_spaces_2}
\caption{The three spaces for $n=2$.}
\end{figure}

The relationship between the three spaces for $n=2$ is illustrated in Figure \ref{figure:three_spaces_2}.  We now give an example to demonstrate that the containment $\mathcal{T}_{{\rm pre},2}\subset \mathcal{TS}_2$ really is proper.

\begin{example}\rm{
Consider the pair of matrices $\left(\left(
\begin{matrix} 
0 & 2
\\ 1 & 0
\end{matrix}
\right),
\left(
\begin{matrix} 
0 & 1
\\ 1 & 0
\end{matrix}
\right)\right)$.  These commute under tropical matrix multiplication, but do not tropically satisfy the polynomial $g_{11}=x_{12}y_{21}-y_{12}x_{21}$. Thus, this pair of matrices is in $\mathcal{TS}_2$, but not in $ \mathcal{T}_{{\rm pre},2}$.
}
\end{example}

\subsection{The $3\times 3$ Tropical Commuting Variety}  

 For higher dimensions, the containment relation between the three sets is as pictured in Figure \ref{figure:three_spaces_n}. We state and prove the result for $n = 3$. The proofs for cases with $n > 3$ are similar.

\begin{proposition}\label{prop:containment}
We have $\mathcal{TC}_3 \subsetneq \mathcal{T}_{{\rm pre},3} \cap \mathcal{TS}_3$, and neither $\mathcal{T}_{{\rm pre},3}$ nor $\mathcal{TS}_3$ are contained in one another.
\end{proposition}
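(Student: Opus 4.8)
The plan is to prove the three assertions of the proposition one at a time. The inclusion $\mathcal{TC}_3 \subseteq \mathcal{T}_{{\rm pre},3}\cap\mathcal{TS}_3$ is immediate: $\mathcal{TC}_3\subseteq\mathcal{T}_{{\rm pre},3}$ because a tropical variety is contained in any tropical prevariety cut out by generators of its ideal, and $\mathcal{TC}_3\subseteq\mathcal{TS}_3$ was observed in the introduction. So what remains is (i) strictness of this inclusion; (ii) $\mathcal{T}_{{\rm pre},3}\not\subseteq\mathcal{TS}_3$; and (iii) $\mathcal{TS}_3\not\subseteq\mathcal{T}_{{\rm pre},3}$. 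Parts (ii) and (iii) I would settle with explicit matrix pairs, verified by directly computing $A\odot B$, $B\odot A$, and the nine tropicalized generators; part (i) needs the \textsf{gfan} computation of $\mathcal{TC}_3$ together with a dimension count.

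For (iii) the key observation is that the diagonal generator $g_{kk}=(XY)_{kk}-(YX)_{kk}$ has its monomial $x_{kk}y_{kk}$ cancel, leaving only $2(n-1)$ terms, whereas the defining condition of $\mathcal{TS}_3$ at the $(k,k)$ entry, namely $(A\odot B)_{kk}=(B\odot A)_{kk}$, is satisfied automatically whenever $a_{kk}+b_{kk}$ attains both minima and hence says nothing about the surviving terms of $g_{kk}$. Concretely I would take the $2\times 2$ witness $\bigl(\bigl(\begin{smallmatrix}0 & 2\\ 1 & 0\end{smallmatrix}\bigr),\bigl(\begin{smallmatrix}0 & 1\\ 1 & 0\end{smallmatrix}\bigr)\bigr)\in\mathcal{TS}_2\setminus\mathcal{T}_{{\rm pre},2}$ and pad it to a $3\times 3$ pair $(A,B)$ by adjoining a third row and column equal to a sufficiently large constant $M$ off the diagonal and $0$ on it. One checks that $A\odot B=B\odot A$ (every product entry touching index $3$ equals $M$, consistently, and the top-left block reproduces the $2\times 2$ product), so $(A,B)\in\mathcal{TS}_3$; but in $g_{11}=x_{12}y_{21}+x_{13}y_{31}-y_{12}x_{21}-y_{13}x_{31}$ the four monomial valuations are $3$, $2M$, $2$, $2M$, whose minimum $2$ is attained once, so $(A,B)\notin\mathcal{T}_{{\rm pre},3}$.

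For (ii) I would exploit the dual phenomenon: when $k\neq\ell$, the generator $g_{k\ell}$ has $2n$ distinct monomials, $n$ of type $x_{ks}y_{s\ell}$ (with minimum $(A\odot B)_{k\ell}$) and $n$ of type $y_{ks}x_{s\ell}$ (with minimum $(B\odot A)_{k\ell}$), and $g_{k\ell}$ tropically vanishes as soon as the overall minimum is attained twice, which may happen entirely inside one of the two groups. I would therefore search---using \textsf{gfan}, since $\mathcal{T}_{{\rm pre},3}$ is just the polyhedral intersection of nine tropical hypersurfaces---for a pair $(A,B)\in\mathcal{T}_{{\rm pre},3}$ with $A\odot B\neq B\odot A$; a natural family of candidates has $A\odot B$ and $B\odot A$ agreeing in every entry but one, say $(1,2)$, where $(A\odot B)_{12}$ is realized by two of its three terms and is strictly less than every term of the $yx$ group, so that each $g_{k\ell}$ with $(k,\ell)\neq(1,2)$ still vanishes (the two group minima coincide, hence the total minimum is attained at least twice) while $g_{12}$ vanishes for the reason just described. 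The output of the search is recorded as an explicit pair of matrices with rational entries.

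For the strictness in (i) I would run \textsf{gfan} on $I_3$ to obtain $\mathcal{TC}_3$ explicitly---its rays, maximal cones, and lineality space---and separately describe the prevariety $\mathcal{T}_{{\rm pre},3}$ and the polyhedral set $\mathcal{TS}_3$. Since $V(I_3)$ is irreducible of dimension $n^2+n=12$, the Structure Theorem for tropical varieties makes $\mathcal{TC}_3$ a pure $12$-dimensional polyhedral complex in $\R^{18}$, and the computation then shows it is a \emph{proper} subset of $\mathcal{T}_{{\rm pre},3}\cap\mathcal{TS}_3$---cleanest to record as a dimension gap, the intersection being higher-dimensional, or else as a single explicit lattice point $\omega=(A,B)$ of $\mathcal{T}_{{\rm pre},3}\cap\mathcal{TS}_3$ for which \textsf{gfan} exhibits a monomial in $in_\omega(I_3)$, certifying $\omega\notin\mathcal{TC}_3$. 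I expect this last step to be the main obstacle: unlike (ii) and (iii) it cannot be reduced to a hand check, relying instead on the Gr\"obner-fan computation of $\mathcal{TC}_3$ and, for the clean dimension argument, on irreducibility of the commuting variety together with the Bieri--Groves/Structure Theorem.
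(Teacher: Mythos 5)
Your overall strategy is the same as the paper's: the proposition is proved by exhibiting explicit witness points in each of the three regions, with non-membership in $\mathcal{TC}_3$ certified by a \textsf{gfan} computation showing a monomial in the initial ideal. Your padding construction for $\mathcal{TS}_3\setminus\mathcal{T}_{{\rm pre},3}$ is a correct, hand-checkable variant (I verified that with $M$ large the padded pair commutes tropically while the diagonal generator $x_{12}y_{21}+x_{13}y_{31}-y_{12}x_{21}-y_{13}x_{31}$ has valuations $3,2M,2,2M$ with a unique minimum); the paper instead uses the pair $C=\left[\begin{smallmatrix}0&1&4\\ 1&0&4\\ 4&4&0\end{smallmatrix}\right]$, $D=\left[\begin{smallmatrix}0&2&4\\ 1&0&4\\ 4&4&0\end{smallmatrix}\right]$, which fails the prevariety conditions at the $(1,1)$ and $(2,2)$ entries for the same reason you identify (cancellation of $x_{kk}y_{kk}$ in the diagonal generators).

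Two caveats. First, for $\mathcal{T}_{{\rm pre},3}\setminus\mathcal{TS}_3$ and for the strictness of $\mathcal{TC}_3\subsetneq\mathcal{T}_{{\rm pre},3}\cap\mathcal{TS}_3$ you describe a search procedure but do not actually produce the witnesses, and these existence claims are precisely the content of the proposition; the paper supplies $E=\left[\begin{smallmatrix}0&1&0\\ 3&0&1\\ 0&3&0\end{smallmatrix}\right]$, $F=\left[\begin{smallmatrix}1&0&3\\ 0&1&0\\ 1&0&3\end{smallmatrix}\right]$ (which lie in $\mathcal{T}_{{\rm pre},3}$ but fail to commute at the $(3,3)$ entry) and, for strictness, the pair $A=\left[\begin{smallmatrix}0&2&0\\ 2&0&8\\ 0&4&0\end{smallmatrix}\right]$, $B=\left[\begin{smallmatrix}12&0&1\\ 0&2&0\\ 1&0&6\end{smallmatrix}\right]$, for which \textsf{gfan} finds the monomial $x_{31}y_{12}y_{31}y_{21}$ in $in_\omega(I_3)$, coming from the polynomial $(XY-YX)_{31}y_{32}y_{21}-(XY-YX)_{32}y_{31}y_{21}-(XY-YX)_{21}y_{31}y_{32}$. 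Second, your proposed ``dimension gap'' shortcut for strictness is not sound as stated: knowing that $\mathcal{T}_{{\rm pre},3}$ has cones of dimension larger than $\dim\mathcal{TC}_3=12$ does not by itself show that $\mathcal{T}_{{\rm pre},3}\cap\mathcal{TS}_3$ exceeds $\mathcal{TC}_3$, since the high-dimensional cones of the prevariety could a priori miss $\mathcal{TS}_3$; you would need to control the dimension of the intersection itself (which neither you nor the paper computes), so the explicit-point-plus-monomial certificate is the route to take.
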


\begin{proof} To see that each region in Figure \ref{figure:three_spaces_n} is really nonempty, consider the following examples.
\begin{itemize}
\item[(a)]  The pair of matrices $A = \left[\begin{array}{ccc}
0 &  2 &   0 \\
2 & 0 &  8 \\
0 &  4 &   0
\end{array}\right], \hspace{1em}
B = \left[\begin{array}{ccc}
12 &  0 &   1 \\
0 & 2 & 0 \\
1 &  0 &  6
\end{array}\right]$ is in $ \left(\mathcal{T}_{{\rm pre},3}\cap \mathcal{TS}_3\right)\setminus \mathcal{TC}_3$.
Indeed, direct computation shows that $(A,B)\in\mathcal{T}_{{\rm pre},3}\cap \mathcal{TS}_3$.  Computations with \textsf{gfan} show that the initial monomial ideal with this weight vector contains the monomial $x_{31}y_{12}y_{31}y_{21}$. Thus, $(A,B)$ does not lie in the tropical variety. The polynomial with this leading term is given by
\begin{equation}\label{eqn:bad.polynomial}
(XY-YX)_{31}y_{32}y_{21} - (XY-YX)_{32}y_{31}y_{21} - (XY-YX)_{21}y_{31}y_{32}.
\end{equation}
Each of the three terms $(XY-YX)_{31}$, $(XY-YX)_{32}$ and $(XY-YX)_{21}$ is a sum of six monomials, two of which are initial monomials. This gives 18 monomials in total with 6 initial monomials. However, the six initial monomials come in three pairs, which are cancelled out by the signs. So (\ref{eqn:bad.polynomial}) has 12 monomials, and the weights are such that there is a unique leading term.

\item[(b)]  The pair of matrices  $C = \left[\begin{array}{ccc}
0 &  1 &   4 \\
1 & 0 &  4 \\
4 &  4 &   0
\end{array}\right], \hspace{1em}
D = \left[\begin{array}{ccc}
0 &  2 &   4 \\
1 & 0 & 4 \\
4 &  4 &  0
\end{array}\right]$ is in $\mathcal{TS}_3\setminus\mathcal{T}_{{\rm pre},3}$.

Indeed, direct computation shows that these matrices commute, and that containment in $\mathcal{T}_{{\rm pre},3}$ fails on the $(1,1)$ and the $(2,2)$ entries of the products.

\item[(c)] The pair of matrices   $E = \left[\begin{array}{ccc}
0 &  1 &   0 \\
3 & 0 &  1 \\
0 &  3 &   0
\end{array}\right], \hspace{1em}
F = \left[\begin{array}{ccc}
1 &  0 &   3 \\
0 & 1 & 0 \\
1 &  0 &  3
\end{array}\right]$ is in $\mathcal{T}_{{\rm pre},3}\setminus \mathcal{TS}_3$.

Indeed, direct computation shows that these matrices fail to commute in the $(3,3)$ entry of the products, and that $(E,F)\in \mathcal{T}_{{\rm pre},3}$.
\end{itemize}

In summary, we have $(A,B)\in \left(\mathcal{T}_{{\rm pre},3}\cap \mathcal{TS}_3\right)\setminus \mathcal{TC}_3$, $(C,D)\in  \mathcal{TS}_3\setminus\mathcal{T}_{{\rm pre},3}$, and $(E,F)\in \mathcal{T}_{{\rm pre},3}\setminus \mathcal{TS}_3$.
\end{proof}

\begin{figure}[h]
\centering
\includegraphics[scale=1.2]{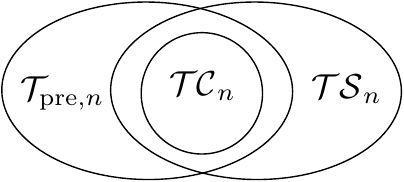}
\label{figure:three_spaces_n}
\caption{The three spaces for $n>2$.}
\end{figure}
 
\subsubsection{The geometry of $\mathcal{TC}_3$ and $\mathcal{T}_{{\rm pre},3}$}

The tropical variety $\mathcal{TC}_3$ lives in an $18$-dimensional ambient space, corresponding to the nine $x_{ij}$ and the nine $y_{ij}$ coordinates.  It is $12$-dimensional, with a $4$-dimensional lineality space. Modding out gives us an $8$-dimensional space. The f-vector is
$$\left(\begin{matrix}1& 1658 &23755& 143852& 481835 &972387& 1186489& 808218 &235038\end{matrix}\right),$$
which ranges from the 1658 rays to the 235,038 $8$-dimensional cones.  The tropical variety is pure, but not simplicial.

The tropical prevariety is much bigger than the tropical variety. The prevariety is neither pure nor simplicial. Modulo the lineality space, its largest cones are of dimension 10. Its $f$-vector is 
$$\left(\begin{matrix}1& 146& 2290& 16322& 66193& 162886& 241476& 199030& 71766& 2397&58\end{matrix}\right).$$

As shown in the proof of Proposition \ref{prop:containment}, apart from the generators of the pre-variety, the tropical basis for $\mathcal{TC}_3$ necessary contains the polynomial
$$ (XY-YX)_{31}Y_{32}Y_{21} - (XY-YX)_{32}Y_{31}Y_{21} - (XY-YX)_{21}Y_{31}Y_{32} $$
and all of its permutations under $\mathbb{S}_3 \times \mathbb{S}_2$, by permuting the rows and columns of the matrices simultaneously, and swapping $X$ and $Y$. By a similar argument, another set of polynomials in the tropical basis are all permutations of
$$(XY-YX)_{12}Y_{21} - (XY-YX)_{21}Y_{12}.$$
However, these two sets of polynomials alone cannot account for the gap in the dimension of the maximal cones between $\mathcal{TC}_3$ and $\mathcal{T}_{{\rm pre},3}$. We suspect that the full tropical basis of $\mathcal{TC}_3$ contains many more polynomials. Computing this basis explicitly is an interesting open question. 

\subsubsection{The symmetric commuting pre-variety}
As a first step to computing the tropical basis of $\mathcal{TC}_3$, we study the analogue of $\mathcal{T}_{{\rm pre},3}$ and $\mathcal{TC}_3$ for pairs of commuting \emph{symmetric matrices}, so that $X = X^T$ and  $Y = Y^T$. These live in a $12$-dimensional ambient space, corresponding to the six $x_{ij}$ and the six $y_{ij}$ coordinates. The ideal $I^{\rm sym}_3$ is generated by the following three polynomials:
\begin{align*}
(XY)_{12} - (YX)_{12} &= x_{11}y_{12}-y_{11}x_{12}+x_{12}y_{22}-y_{12}x_{22}+x_{13}y_{23}-y_{13}x_{23} \\
(XY)_{13} - (YX)_{13} &= x_{11}y_{13}-y_{11}x_{13}+x_{12}y_{23}-y_{12}x_{23}+x_{13}y_{33}-y_{13}x_{33} \\
(XY)_{23} - (YX)_{23} &= x_{12}y_{13}-y_{12}x_{13}+x_{22}y_{23}-y_{22}x_{23}+x_{23}y_{33}-y_{23}x_{33}.
\end{align*}

The symmetric tropical commuting variety is $9$-dimensional, with a $2$-dimensional lineality space. Its f-vector is
$$\left(\begin{matrix}1 &66 & 705 & 3246 & 7932 & 10878 & 8184 & 2745\end{matrix}\right).$$
The symmetric tropical commuting prevariety is only one dimension bigger. It has dimension 10, also with a $2$-dimensional lineality space. Its $f$-vector is
$$\left(\begin{matrix}
1 & 39 & 375 & 1716 & 4359 & 6366 & 5136 & 1869 & 6
\end{matrix}\right).$$
Under the action of $\mathbb{S}_3 \times \mathbb{S}_2$, the six cones of dimension ten form three orbits. We name them type I, II and III. Type I has orbit size 1, with initial monomials 
$$ x_{13}y_{23} - x_{23}y_{13}, \hspace{1em} x_{12}y_{23} - x_{23}y_{12}, \hspace{1em} x_{12}y_{13} - x_{13}y_{12}.  $$
Type II has orbit size 2, with initial monomials
$$ x_{12}y_{11} - x_{12}y_{22}, \hspace{1em} x_{13}y_{11} - x_{13}y_{33}, \hspace{1em} x_{23}y_{22} - x_{23}y_{33}.  $$
Type III has orbit size 3, with initial monomials
$$ x_{11}y_{12} - x_{12}y_{11}, \hspace{1em} x_{11}y_{13} - x_{13}y_{11}, \hspace{1em} x_{12}y_{13} - x_{13}y_{12}. $$
In theory, since there are three generators with six terms, there can be at most $\binom{6}{2}^3 = 15^3$ possible cones of the symmetric tropical commuting prevariety with maximal dimension. It remains to be understood why only the above six cones are full-dimensional. 

\section{Summary and Future Directions}\label{sec:summary}

In this work we studied tropical commuting matrices from the perspectives of linear algebra and algebraic geometry. We gave algebraic and geometric conditions for $n \times n$ polytropes, a special class of matrices, to commute. Our conditions are necessary and sufficient for $n = 2,3$. We also tropicalize the classical commuting variety of $n \times n$ matrices, explicitly compute them for $n = 2,3$, and study their relations to the tropical commuting prevariety and the tropical commuting set. Two major open problems remain in dimensions $n \geq 3$: to find a complete characterization of the tropical commuting set, and to find a formula for the tropical basis for the tropical commuting variety. 

Another future direction is to consider triples of pairwise-commuting $n\times n$ matrices.  It was shown in \cite{Ge,GS} that the variety of triples of commuting $n\times n$ matrices is irreducible for $n\leq 4$ but reducible for $n\geq 32$.  More generally, one can study the space $\mathcal{C}(d,n)$ of commuting $d$-tuples of $n\times n$ matrices.  For $d\geq 4$ and $n\geq 4$, this variety is reducible \cite{Ge}.  Studying the tropical analogues of these spaces would be a natural generalization of the work we have done here.

\medskip
{\bf Acknowledgements.}\\
We thank JM Landsberg, Laura Matusevich, and Bernd Sturmfels for guidance and advice throughout this project. Ralph Morrison was supported by the US National Science Foundation.  Ngoc Tran was supported by the Simons Foundation ($\#197982$ to The University of Texas at Austin).

\end{document}